\documentclass[reqno]{amsart}
\usepackage{hyperref}
\usepackage{setspace}


\def \R{\mathbb{R}}

\def \N{\mathbb{N}}
\def \E{\mathbb{E}}

\def \bf{\textbf}

\def \ni {\noindent}

\vspace{9mm}

\begin{document}
\setcounter{page}{1}

\mbox{}
\title[Controllability of Time-dependent  NSFDE driven by fBm]{ Controllability
of Time-dependent   Neutral   Stochastic Functional Differential
Equations Driven By
 A Fractional  Brownian Motion }
 \bigskip

\bigskip

\author[E.  LAKHEL]{  El Hassan Lakhel }
\maketitle
\begin{center}{National School of Applied Sciences, Cadi Ayyad University, 46000
Safi,\\ Morocco}\end{center}

\vspace{0.8cm}

 \footnotetext[1]{Lakhel E.: e.lakhel@uca.ma      }

\medskip

\begin{abstract}
In this paper we consider the controllability of certain  class of
non-autonomous    neutral evolution stochastic functional
differential equations, with time varying  delays, driven   by a
fractional Brownian motion  in a separable real Hilbert space.
Sufficient conditions for controllability are obtained  by employing
a fixed point approach. A practical example is provided to
illustrate the viability of the abstract result of this work.
\end{abstract}

\medskip
 \ni {\bf {Keywords:}} Controllability.  Neutral stochastic functional differential
equations. Evolution operator. Fractional Brownian motion.
\\2000 Mathematics Subject Classification. 35R10,  60H15,  60G15,
60J65.

 \maketitle \numberwithin{equation}{section}
\newtheorem{theorem}{Theorem}[section]
\newtheorem{lemma}[theorem]{Lemma}
\newtheorem{proposition}[theorem]{Proposition}
\newtheorem{definition}[theorem]{Definition}
\newtheorem{example}[theorem]{Example}
\newtheorem{remark}[theorem]{Remark}
\allowdisplaybreaks


\section{Introduction}
Controllability is one of the fundamental concepts in mathematical
control theory and plays an important role in control systems.
Controllability generally means that it is possible to steer a
dynamical control system from an arbitrary initial state to an
arbitrary final state using the set of admissible controls. If the
system cannot be controlled completely then different types of
controllability can be defined such as approximate, null, local
null and local approximate null controllability. A standard
approach is to transform the controllability problem into a
fixed-point problem for an appropriate operator in a functional
space. The problem of controllability for functional differential
systems has been extensively studied in many papers
\cite{arthi11,balach10,balach11,chang09,mahmod03,shak11}. For
example, Sakthivel and Ren \cite{Sakren11} studied the
     complete controllability of stochastic evolution equations with jumps. In \cite{balad02}, Balasubramaniam and
Dauer   discussed the controllability of semilinear stochastic delay
evolution equations in Hilbert spaces.

It is known that fractional Brownian motion, with Hurst parameter
$H\in(0,1)$,
  is a generalization of Brownian motion, it reduces to Brownian motion when
  $H=\frac{1}{2}$. A general theory for the infinite dimensional stochastic differential equations   driven by
a fractional Brownian motion (fBm) is not yet established and just
a few results have been proved.
 In addition, in many mathematical models the claims often
display long-range memories, possibly due to extreme weather,
natural disasters, in some cases, many stochastic dynamical systems
depend not only on present and past states, but also contain the
derivatives with delays. Neutral functional differential equations
are often used to describe such systems. Very recently, neutral
stochastic functional differential equations driven by fractional
Brownian motion   have attracted the interest of many researchers.
One can see \cite{boufoussi3, carab, lak15} and the references
therein. The literature concerning the existence and qualitative
properties of solutions of time-dependent functional stochastic
differential equations  is very restricted and limited to a very few
articles. This fact is the main motivation of our  work. We mention
here the recent paper by  Ren et al.   \cite{ren13} concerning the
existence of mild solutions for a class  of stochastic evolution
equations driven by fractional Brownian  motion in Hilbert space.
    \\
Motivated by the above works, this paper is concerned with  the
controllability results for a class of time-dependent
 neutral functional stochastic differential equations  described in the form:
 {\begin{small}\begin{equation}\label{eq1}
 \left\{\begin{array}{llll}
d[x(t)+g(t,x(t-r(t)))]=[A(t)x(t)+f(t,x(t-\rho(t)))+Bu(t)]dt+\sigma
(t)dB^H(t),\, t \in[0, T],  \\ \\

x(t)=\varphi(t) ,\;-\tau \leq t \leq 0\quad a.s.\quad \tau>0,
\end{array}\right.
\end{equation}\end{small}

in a real Hilbert space $X$ with inner product $<.,.>$ and norm
$\|.\|$,  where \{ $A(t),\,t\in[0,T]\}$ is a family of linear
closed operators from a  space $X$ into $X$ that generates an
  evolution system of operators $\{U(t,s),\, 0\leq s\leq t \leq T\}$. $B^H$ is a fractional Brownian motion on a
real and separable Hilbert space $Y$, $r,\; \rho
\;:[0,+\infty)\rightarrow [0,\tau]\; (\tau
>0)$ are continuous and $f,g:[0,+\infty)\times X \rightarrow X,\;
\; \sigma:[0,+\infty) \rightarrow \mathcal{L}_2^0(Y,X)$,$\; \;$
   are appropriate
functions. Here $\mathcal{L}_2^0(Y,X)$ denotes the space of all
$Q$-Hilbert-Schmidt operators from $Y$ into $X$ (see section 2
below).

To the best of our knowledge, there is no paper which investigates
the study of controllability for  time-dependent neutral stochastic
functional differential equations with delays driven   by fractional
Brownian motion. Thus, we will make the first attempt
to study such problem in this paper.\\
 Our results are inspired by the one in
\cite{boufoussi14} where the existence and uniqueness of mild
solutions to  model (\ref{eq1}) with $B=0,$   is studied.

The  rest of this  paper is organized as follows.  Section 2,
recapitulate some notations,  basic  concepts, and basic results
about fractional Brownian motion,  Wiener integral over Hilbert
spaces and we recall some preliminary results about evolution
operator.  Section 3, gives sufficient conditions to prove the
controllability result for the problem $(\ref{eq1})$. In Section 4
we give an example to illustrate the efficiency of the obtained
result.
\section{Preliminaries}\label{sec:1}
\subsection{Evolution families}
In this subsection we introduce the notion of evolution family.

\begin{definition}\label{d1}  A set  $\{U(t, s):  0\leq s \leq t\leq T\}$
of bounded linear operators on a Hilbert space $X$ is called an
\emph{evolution family} if
\begin{itemize}
\item[(a)] $U(t,s)U(s,r)=U(t,r)$, $U(s, s)=I$ if $ r \leq s \leq  t
$,
\item[(b)] $(t,s)\to U(t,s)x$ is strongly continuous for $t> s$.
\end{itemize}
\end{definition}
Let $\{A(t),\, t\in[0,T]\}$ be a family
of closed densely defined linear unbounded operators on the
Hilbert space $X$ and with domain $D(A(t))$ independent of $t$, satisfying the following conditions introduced by   \cite{AT}.

There exist constants $\lambda_0\geq 0$, $\theta\in
(\frac{\pi}{2}, \pi)$, $L$,
 $K\geq 0$, and $\mu$, $\nu\in (0, 1]$ with $\mu +\nu >1$ such that
\begin{equation}\label{AT1}
\Sigma_\theta\cup \{0\}\subset\rho (A(t)-\lambda_0), \quad
\|R(\lambda, A(t)-\lambda_0)\|\leq\frac{K}{1+|\lambda|}
\end{equation}
and
\begin{equation}\label{AT2}
\|(A(t)-\lambda_0)R(\lambda, A(t)-\lambda_0)\big[R(\lambda_0,
A(t)) -R(\lambda_0, A(s))\big]\|\leq L|t-s|^\mu|\lambda|^{-\nu},
\end{equation}
for $t$, $s\in\mathbb{R}$,
$\lambda\in\Sigma_\theta$ where $\Sigma_\theta:=\big\{\lambda\in{\mathbb{C}}-\{0\}:
 |\arg \lambda|\leq\theta\big\}$.

It is well known, that this assumption implies that there exists a unique evolution
family $\{U(t, s):  0\leq s \leq t\leq T\}$ on $X$ such that $(t,
s)\to U(t, s)\in{\mathcal L}(X)$ is continuous for $t>s$,
$U(\cdot, s)\in \mathcal{C}^1((s, \infty), {\mathcal L}(X))$,
 $\partial_tU(t, s)=A(t)U(t, s)$, and
\begin{equation}\label{w1}
\|A(t)^kU(t, s)\|\leq C(t-s)^{-k}
\end{equation}
for $0<t-s\leq 1$, $k=0, 1$, $0\leq \alpha <\mu$, $x\in
D((\lambda_0-A(s))^\alpha)$, and a constant $C$ depending only on
the constants in (\ref{AT1})-(\ref{AT2}). Moreover,
$\partial_s^+U(t, s)x=-U(t, s)A(s)x$ for $t>s$ and $x\in D(A(s))$
with $A(s)x\in \overline {D(A(s))}$. We say that $A(\cdot)$
generates $\{U(t, s): 0\leq s \leq t\leq T\}$. Note that $U(t,s)$
is exponentially bounded by (\ref{w1}) with $k=0$.

\begin{remark} If $\{A(t), \,t\in[0,T]\}$ is a second order
differential operator $A$, that is $A(t)=A$ for each $t\in[0,T]$,
then $A$ generates a $C_0-$semigroup $\{e^{At}, t\in[0,T]\}$.
\end{remark}

For additional details on evolution system and their properties,
we refer the reader to \cite{pazy}.

\subsection{Fractional Brownian Motion}

For the convenience for the reader we recall briefly here some of
the basic results of fractional Brownian motion calculus. For
details of this section, we refer the reader to \cite{nualart} and
the references therein.\\

Let $(\Omega,\mathcal{F}, \mathbb{P})$ be a complete probability
space. A standard fractional Brownian motion (fBm) $\{\beta^H(t),
t\in\mathbb{R}\}$
 with Hurst parameter $H\in (0, 1)$ is a zero mean  Gaussian process with continuous
sample paths such that
\begin{eqnarray}\label{A3}
\mathbb{E}[\beta^H(t)\beta^H(s)]=\frac{1}{2}\big(t^{2H}+s^{2H}-|t-s|^{2H}\big)
\end{eqnarray}
for $s$, $t\in\mathbb{R}$. It is clear that for $H=1/2$, this
process is a standard Brownian motion. In this paper, it is
assumed that $H\in (\frac{1}{2}, 1)$.

This process was introduced by \cite{kol} and later
studied by \cite{MV}.  Its self-similar
and long-range dependence make this process a useful
driving noise in
 models arising in physics, telecommunication networks, finance and other fields.


 Consider a time interval $[0,T]$ with arbitrary fixed
horizon $T$ and let $\{\beta^H(t) , t \in [0, T ]\}$ the
one-dimensional fractional Brownian motion with Hurst parameter
$H\in(1/2,1)$. It is well known that  $\beta^H$ has the following
Wiener integral representation:
\begin{equation}\label{rep}
\beta^H(t) =\int_0^tK_H(t,s)d\beta(s),
 \end{equation}
where $\beta = \{\beta(t) :\; t\in [0,T]\}$ is a Wiener process,
and $K_H(t; s)$ is the kernel given by
$$K_H(t, s )=c_Hs^{\frac{1}{2}-H}\int_s^t (u-s)^{H-\frac{3}{2}}u^{H-\frac{1}{2}}du,
$$
for $t>s$, where $c_H=\sqrt{\frac{H(2H-1)}{\beta (2-2H,H-\frac{1}{2})}}$ and $\beta(,)$
 denotes the Beta function. We put $K_H(t, s ) =0$ if $t\leq s$.\\
We will denote by $\mathcal{H}$ the reproducing kernel Hilbert
space of the fBm. In fact $\mathcal{H}$ is the closure of the set of
indicator functions $\{1_{[0;t]},  t\in[0,T]\}$ with respect to
the scalar product
$$\langle 1_{[0,t]},1_{[0,s]}\rangle _{\mathcal{H}}=R_H(t , s).$$
The mapping $1_{[0,t]}\rightarrow \beta^H(t)$
 can be extended to an isometry between $\mathcal{H}$
and the first  Wiener chaos and we will denote by
$\beta^H(\varphi)$ the image of $\varphi$ by the previous
isometry.

We recall that for $\psi,\varphi \in \mathcal{H}$ their scalar
product in $\mathcal{H}$ is given by
$$\langle \psi,\varphi\rangle _{\mathcal{H}}=H(2H-1)\int_0^T\int_0^T\psi(s)\varphi(t)|t-s|^{2H-2}dsdt.
$$
Let us consider the operator $K_H^*$ from $\mathcal{H}$ to
$\mathbb{L}^2([0,T])$ defined by
$$(K_H^*\varphi)(s)=\int_s^T\varphi(r)\frac{\partial K}{\partial
r}(r,s)dr.
$$ We refer to \cite{nualart} for the proof of the fact
that $K_H^*$ is an isometry between $\mathcal{H}$ and
$L^2([0,T])$. Moreover for any $\varphi \in \mathcal{H}$, we have
$$\beta^H(\varphi)=\int_0^T(K_H^*\varphi)(t)d\beta(t).$$
It follows from \cite{nualart} that the elements of $\mathcal{H}$
may be not functions but distributions of negative order. In the
case $H>\frac{1}{2}$, the second partial derivative of the
covariance function
$$
\frac{\partial R_H}{\partial t\partial s}=\alpha_H|t-s|^{2H-2},
$$
where $\alpha_H=H(2H-2)$, is integrable, and we can write
\begin{equation}\label{r(t,t)}
R_H(t,s)=\alpha_H\int_0^t\int_0^s|u-v|^{2H-2}dudv.
\end{equation}

In order to obtain a space of functions contained in
$\mathcal{H}$, we consider the linear space $|\mathcal{H}|$
generated by the measurable functions $\psi$ such that
$$\|\psi \|^2_{|\mathcal{H}|}:= \alpha_H  \int_0^T \int_0^T|\psi(s)||\psi(t)| |s-t|^{2H-2}dsdt<\infty,
$$
where $\alpha_H = H(2H-1)$. The space $|\mathcal{H}|$ is a Banach
space with the norm  $\|\psi\|_{|\mathcal{H}|}$ and we have the
following inclusions (see \cite{nualart}).
\begin{lemma}\label{lem1}
$$\mathbb{L}^2([0,T])\subseteq \mathbb{L}^{1/H}([0,T])\subseteq |\mathcal{H}|\subseteq \mathcal{H},
$$
and for any $\varphi\in \mathbb{L}^2([0,T])$, we have
$$\|\psi\|^2_{|\mathcal{H}|}\leq 2HT^{2H-1}\int_0^T
|\psi(s)|^2ds.
$$
\end{lemma}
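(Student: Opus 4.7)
The plan is to verify the chain of inclusions one link at a time, after which the quantitative bound falls out from a short direct calculation.

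For the outermost inclusion $\mathbb{L}^2([0,T]) \subseteq \mathbb{L}^{1/H}([0,T])$, observe that $H \in (1/2,1)$ forces $1/H < 2$, so this is merely monotonicity of $\mathbb{L}^p$-norms on a bounded interval, and H\"older's inequality gives the quantitative form $\|\psi\|_{\mathbb{L}^{1/H}} \le T^{H-1/2}\|\psi\|_{\mathbb{L}^{2}}$. For the innermost inclusion $|\mathcal{H}| \subseteq \mathcal{H}$, the scalar product on $\mathcal{H}$ displayed just before the lemma coincides, on measurable functions, with the (possibly infinite) double integral that defines $\|\cdot\|_{|\mathcal{H}|}^2$, and since $\psi(s)\psi(t)|s-t|^{2H-2}\le|\psi(s)||\psi(t)||s-t|^{2H-2}$ pointwise, one obtains $\|\psi\|_{\mathcal{H}}\le\|\psi\|_{|\mathcal{H}|}$, so $|\mathcal{H}|$ embeds continuously into $\mathcal{H}$.

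The middle inclusion $\mathbb{L}^{1/H}([0,T]) \subseteq |\mathcal{H}|$ is the only substantive step, and the natural tool is the one-dimensional Hardy--Littlewood--Sobolev inequality. With singularity exponent $\lambda=2-2H\in(0,1)$ and conjugate pair $p=q=1/H$, the HLS scaling identity $\tfrac{1}{p}+\tfrac{1}{q}+\lambda=2H+(2-2H)=2$ is satisfied, and hence
\[\int_0^T\!\!\int_0^T |\psi(s)|\,|\psi(t)|\,|s-t|^{2H-2}\,ds\,dt \le C_H\,\|\psi\|_{\mathbb{L}^{1/H}}^2<\infty.\]
A naive H\"older argument in a single variable fails here, since the dual exponent of $1/H$ is $1/(1-H)$ and the kernel raised to that power produces the non-integrable tail $|s-t|^{-2}$; invoking HLS (or an equivalent rearrangement argument) is essentially unavoidable, and this is where the restriction $H>1/2$ enters critically.

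For the explicit $\mathbb{L}^2$ bound, the plan is to combine the pointwise AM--GM estimate $|\psi(s)||\psi(t)|\le\tfrac{1}{2}(|\psi(s)|^2+|\psi(t)|^2)$ with the symmetry of the kernel and Fubini to reduce the double integral to
\[\|\psi\|^2_{|\mathcal{H}|}\le\alpha_H\int_0^T|\psi(s)|^2\left(\int_0^T|s-t|^{2H-2}\,dt\right)ds.\]
Splitting the inner integral at $t=s$ and using $2H-1>0$ gives $\int_0^T|s-t|^{2H-2}\,dt=\frac{s^{2H-1}+(T-s)^{2H-1}}{2H-1}\le\frac{2T^{2H-1}}{2H-1}$, and substituting $\alpha_H=H(2H-1)$ collapses the prefactor to exactly $2HT^{2H-1}$. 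No step is genuinely difficult; the only real care required is lining up the HLS exponents for the middle inclusion, after which everything is routine bookkeeping.
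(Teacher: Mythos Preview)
Your argument is correct. The paper does not supply its own proof of this lemma; it is simply quoted from Nualart's monograph \cite{nualart}. What you have written is essentially the standard argument found there: H\"older's inequality on a finite interval for $\mathbb{L}^2\subseteq\mathbb{L}^{1/H}$, the one-dimensional Hardy--Littlewood--Sobolev inequality (with $p=q=1/H$ and singularity exponent $2-2H$) for $\mathbb{L}^{1/H}\subseteq|\mathcal{H}|$, the trivial pointwise comparison $\psi(s)\psi(t)\le|\psi(s)||\psi(t)|$ for $|\mathcal{H}|\subseteq\mathcal{H}$, and the AM--GM symmetrisation followed by the elementary evaluation of $\int_0^T|s-t|^{2H-2}\,dt$ for the quantitative $\mathbb{L}^2$ bound. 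Your remark that a naive single-variable H\"older attempt fails for the middle inclusion (the dual-exponent kernel becomes non-integrable) is a nice observation and explains why HLS is the right tool here.
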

Let $X$ and $Y$ be two real, separable Hilbert spaces and let
$\mathcal{L}(Y,X)$ be the space of bounded linear operator from
$Y$ to $X$. For the sake of convenience, we shall use the same
notation to denote the norms in $X,Y$ and $\mathcal{L}(Y,X)$. Let
$Q\in \mathcal{L}(Y,Y)$ be an operator defined by $Qe_n=\lambda_n
e_n$ with finite trace
 $trQ=\sum_{n=1}^{\infty}\lambda_n<\infty$. where $\lambda_n \geq 0 \; (n=1,2...)$ are non-negative
  real numbers and $\{e_n\}\;(n=1,2...)$ is a complete orthonormal basis in $Y$.
 Let $B^H=(B^H(t))$ be  $Y-$ valued fbm on
  $(\Omega,\mathcal{F}, \mathbb{P})$ with covariance $Q$ as
 $$B^H(t)=B^H_Q(t)=\sum_{n=1}^{\infty}\sqrt{\lambda_n}e_n\beta_n^H(t),
 $$
 where $\beta_n^H$ are real, independent fBm's. This process is  Gaussian, it
 starts from $0$, has zero mean and covariance:
 $$E\langle B^H(t),x\rangle\langle B^H(s),y\rangle=R(s,t)\langle Q(x),y\rangle \;\; \mbox{for all}\; x,y \in Y \;\mbox {\,and}\;  t,s \in [0,T].
 $$
In order to define Wiener integrals with respect to the $Q$-fBm,
we introduce the space $\mathcal{L}_2^0:=\mathcal{L}_2^0(Y,X)$  of
all $Q$-Hilbert-Schmidt operators $\psi:Y\rightarrow X$. We recall
that $\psi \in \mathcal{L}(Y,X)$ is called a $Q$-Hilbert-Schmidt
operator, if
$$  \|\psi\|_{\mathcal{L}_2^0}^2:=\sum_{n=1}^{\infty}\|\sqrt{\lambda_n}\psi e_n\|^2 <\infty,
$$
and that the space $\mathcal{L}_2^0$ equipped with the inner
product
$\langle \varphi,\psi \rangle_{\mathcal{L}_2^0}=\sum_{n=1}^{\infty}\langle
\varphi e_n,\psi e_n\rangle$ is a separable Hilbert space.\\
Now, let $\phi(s);\,s\in [0,T]$ be a function with values in
$\mathcal{L}_2^0(Y,X)$, such that \\ $\sum_{n=1}^{\infty}\|K^*\phi
Q^{\frac{1}{2}}e_n\|_{\mathcal{L}_2^0}^2<\infty.$ The Wiener
integral of $\phi$ with respect to $B^H$ is defined by

\begin{equation}\label{int}
\int_0^t\phi(s)dB^H(s)=\sum_{n=1}^{\infty}\int_0^t
\sqrt{\lambda_n}\phi(s)e_nd\beta^H_n(s)=\sum_{n=1}^{\infty}\int_0^t
\sqrt{\lambda_n}(K_H^*(\phi e_n)(s)d\beta_n(s)
\end{equation}
where $\beta_n$ is the standard Brownian motion used to  present $\beta_n^H$ as in $(\ref{rep})$.\\
Now, we end this subsection by stating the following result which is
fundamental to prove our result.

\begin{lemma}\cite{boufoussi14}\label{lem2}
Suppose that $\sigma:[0,T]\rightarrow \mathcal{L}_2^0(Y,X)$
satisfies ${\sup_{t\in[0,T]}
\|\sigma(t)\|^2_{\mathcal{L}_2^0}<\infty}$, and Suppose that
$\{U(t,s),\, 0\leq s\leq t \leq T\}$ is an evolution system of
operators satisfying  $ \|U(t,s)\|\leq Me^{-\beta(t-s)},$  for some
constants $\beta>0$ and $M\geq 1$   for  all   $t\geq s. $ Then, we
have $$ \mathbb{E}\|\int_0^tU(t,s)\sigma(s)dB^H(s)\|^2\leq C
M^2t^{2H}
  (\sup_{t\in[0,T]} \|\sigma(t)\|_{\mathcal{L}_2^0})^2.$$
\end{lemma}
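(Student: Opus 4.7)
The plan is to work from the series definition (\ref{int}) of the Wiener integral with respect to $B^H$, reduce the mean-square $X$-norm to a sum of scalar mean-squares using mutual independence of the one-dimensional fBm's $\beta_n^H$, and then invoke the isometry between $\mathcal{H}$ and $L^2([0,T])$ induced by $K_H^*$. Concretely, I would first write
\begin{equation*}
\E\Big\|\int_0^t U(t,s)\sigma(s)\,dB^H(s)\Big\|^2
=\sum_{n=1}^{\infty}\E\Big\|\int_0^t \sqrt{\lambda_n}\,U(t,s)\sigma(s)e_n\,d\beta_n^H(s)\Big\|^2,
\end{equation*}
and then, for each $n$, use the vector-valued analogue of the scalar identity $\E|\beta^H(\varphi)|^2=\|\varphi\|_{\mathcal{H}}^2$ to express that term as $\|\,1_{[0,t]}\sqrt{\lambda_n}\,U(t,\cdot)\sigma(\cdot)e_n\,\|_{\mathcal{H}}^2$.

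Next, applying Lemma \ref{lem1} (the inclusion $L^2([0,T])\subseteq|\mathcal{H}|\subseteq\mathcal{H}$ with explicit constant $2Ht^{2H-1}$) coordinatewise along an orthonormal basis of $X$ and summing over $n$, I would obtain
\begin{equation*}
\E\Big\|\int_0^t U(t,s)\sigma(s)\,dB^H(s)\Big\|^2
\leq 2Ht^{2H-1}\int_0^t\sum_{n=1}^{\infty}\lambda_n\|U(t,s)\sigma(s)e_n\|^2\,ds.
\end{equation*}
The inner sum is exactly $\|U(t,s)\sigma(s)\|_{\mathcal{L}_2^0}^2\leq\|U(t,s)\|^2\|\sigma(s)\|_{\mathcal{L}_2^0}^2$, and using the hypothesis $\|U(t,s)\|\leq Me^{-\beta(t-s)}\leq M$ together with the uniform bound on $\sigma$, the remaining integral is at most $M^2 t\,(\sup_{s\in[0,T]}\|\sigma(s)\|_{\mathcal{L}_2^0})^2$. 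Combining everything yields the claim with $C=2H$.

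The only technical subtlety I anticipate is the vector-valued version of the isometry used in the first step: one must check that for a scalar fBm $\beta^H$ and a deterministic integrand $g:[0,T]\to X$, $\E\|\int_0^T g(s)\,d\beta^H(s)\|_X^2=\|g\|_{\mathcal{H}(X)}^2$ where $\mathcal{H}(X)$ is the Hilbert-valued analogue of $\mathcal{H}$. This follows by expanding $g(s)$ along an orthonormal basis of $X$, applying the scalar identity coordinatewise, and recombining via Parseval; it is routine but is the one place where leaving the scalar setting requires care. Apart from this, the proof is a mechanical assembly of Lemma \ref{lem1}, the definition of the $\mathcal{L}_2^0$-norm, Fubini, and the exponential bound on the evolution family.
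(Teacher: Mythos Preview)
The paper does not actually supply a proof of this lemma: it is quoted from \cite{boufoussi14} and stated without argument, so there is no in-paper proof to compare against. That said, your sketch is correct and is exactly the standard route one finds in the cited reference: expand via the series definition (\ref{int}), use independence of the $\beta_n^H$ together with the $\mathcal{H}$--$L^2$ isometry, apply Lemma~\ref{lem1} on $[0,t]$ to pass from the $\mathcal{H}$-norm to the $L^2$-norm with factor $2Ht^{2H-1}$, recognise the resulting sum as $\|U(t,s)\sigma(s)\|_{\mathcal{L}_2^0}^2$, and finish with the operator bound $\|U(t,s)\|\leq M$. Your identification $C=2H$ matches the way the lemma is invoked later in the paper (see the estimates of $I_{51}$, $I_{52}$, and $I_{6,1}$, where the intermediate bound $2Ht^{2H-1}\int_0^t\|\cdot\|_{\mathcal{L}_2^0}^2\,ds$ is used explicitly). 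The vector-valued isometry you flag is indeed the only place needing care, and your Parseval argument handles it.
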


\begin{remark}
Thanks to Lemma \ref{lem2}, the stochastic integral $$
Z(t)=\int_{0}^{t}U(t,s)\sigma(s)dB^H(s),\qquad t\in[0,T],
$$ is
well-defined.
\end{remark}
\section{Controllability Result}

Henceforth we will assume that the family $\{A(t),\,t\in[0,T]\}$
 of linear operators generates an evolution system of operators $\{U(t,s),\, 0\leq s\leq t \leq
T\}$.  In this section we derive controllability conditions  for
time-dependent  neutral stochastic functional differential equations
with variable delays driven   by    a fractional Brownian motion in
a real separable Hilbert space. Before starting, we introduce the
concept of a mild solution of the problem (\ref{eq1})  and
controllability of
 neutral stochastic functional differential equation.

\begin{definition}
An $X$-valued  process $\{x(t),\;t\in[-\tau,T]\}$, is called  a
mild solution of equation (\ref{eq1}) if

\begin{itemize}
\item[$i)$] $x(.)\in \mathcal{C}([-\tau,T],\mathbb{L}^2(\Omega,X))$,
\item[$ii)$] $x(t)=\varphi(t), \, -\tau \leq t \leq 0$.
\item[$iii)$]For arbitrary $t \in [0,T]$, we have
\begin{equation}\label{eqmild2}
\begin{array}{ll}
x(t)&=U(t,0)(\varphi(0)+g(0,\varphi(-r(0))))-g(t,x(t-r(t)))\\ \\
&- \int_0^t AU(t,s)g(s,x(s-r(s)))ds +\int_0^t U(t,s)f(s,x(s-\rho
(s)))ds\\ \\
&+\int_0^t U(t,s)(Bu)(s)ds+\int_0^t U(t,s)\sigma(s)dB^H(s),\;\;\;
\mathbb{P}-a.s.
\end{array}
\end{equation}
\end{itemize}
\end{definition}
\begin{definition}
The  system (\ref{eq1}) is said to be controllable on the interval
$[-\tau,T]$, if for every initial stochastic process $\varphi$
defined $[-\tau,0]$, there exists a stochastic control $u\in
L^2([0,T], U)$ such that the mild solution $x(.)$ of  (\ref{eq1})
satisfies $x(T)=x_1$, where $x_1$ and $T$ are the preassigned
terminal state and time, respectively.
\end{definition}
We will study the problem  (\ref{eq1}) under   the following
assumptions:

\begin{itemize}
\item [$(\mathcal{H}.1)$]
\begin{itemize}
  \item [$i)$]The evolution family is exponentially
stable, that is, there exist two constants $\beta>0$ and $M\geq 1$
such that
$$
\|U(t,s)\|\leq Me^{-\beta(t-s)},\qquad     for\, all \quad t\geq
s,
$$
  \item [$ii)$] There exist a constant $M_*>0$\, such that
$$
\|A^{-1}(t)\| \leq M_*\qquad for \; all \quad t\in [0,T].
$$
\end{itemize}
\item [$(\mathcal{H}.2)$] The maps   $f, g:[0,T]\times X \rightarrow X
$ are continuous  functions and there  exist two positive
constants $C_1$ and $C_2$, such that for all $t\in[0,T]$ and
$x,y\in X$:
\begin{itemize}
  \item[$i)$] $\|f(t,x)-f(t,y)\|\vee \|g(t,x)-g(t,y)\|\leq C_1\|x-y\|$.
  \item[$ii)$]  $\|f(t,x)\|^2 \vee \|A^k(t)g(t,x)\|^2\leq C_2(1+\|x\|^2),\quad k=0,1.$
\end{itemize}
\item [$(\mathcal{H}.3)$]

\begin{itemize}
  \item [$i)$] There exists a positive  constant $L_*$ such that
  $L^*M_*<\frac{1}{\sqrt{6}}$, and
$$
\|A(t)g(t,x)-A(t)g(t,y)\|\leq L_*\|x-y\|,
$$
for all $t\in[0,T]$ and $x,y\in X$.
  \item [$ii)$] The function $g$ is continuous in the quadratic mean
  sense: for all $x(.)\in\mathcal{C}([0,T],L^2(\Omega,X))$,
  we have
  $$
\lim_{t\longrightarrow s}\E\|g(t,x(t))-g(s,x(s))\|^2=0.
  $$
\end{itemize}

\item [$(\mathcal{H}.4)$]

\begin{itemize}
  \item [$i)$] The map  $\sigma:[0,T]\longrightarrow \mathcal{L}^0_2(Y,X)$
is bounded, that is :  there exists a positive  constant $L$  such
that $\|\sigma (t)\|_{\mathcal{L}^0_2(Y,X)}\leq L$
uniformly in $t \in [0,T]$.\\
  \item [$ii)$] Moreover, we assume that the initial data $\varphi=\{\varphi(t): -\tau\leq t \leq 0 \}$ satisfies
   $\varphi \in \mathcal{C}([-\tau,0],\mathbb{L}^2(\Omega,X))$.\\
\end{itemize}

 \item [$(\mathcal{H}.5)$] The linear operator $W$ from $U$ into $X$
 defined by
 $$
Wu=\int_0^TU(T,s)Bu(s)ds
 $$
 has an inverse operator $W^{-1}$ that takes values in $L^2([0,T],U)\setminus ker
 W$, where $ker
 W=\{x\in L^2([0,T],U),  \; W x=0\}$ (see \cite{klam07}), and there
 exists finite
 positive constants $M_b,$ $M_w$ such that $\|B\|\leq M_b$ and $\|W^{-1}\|\leq M_w.$
\end{itemize}

The main result of this paper is given in the next theorem.
\begin{theorem}\label{jthm1}
Suppose that $(\mathcal{H}.1)-(\mathcal{H}.5)$ hold. Then,  the
system  (\ref{eq1}) is controllable  on $[-\tau,T]$.
\end{theorem}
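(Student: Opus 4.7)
The plan is to recast the controllability problem as a fixed point problem on the Banach space $\mathcal{C}([-\tau,T], \mathbb{L}^2(\Omega, X))$ endowed with the norm $\|x\|_T^2 = \sup_{t\in[-\tau,T]} \mathbb{E}\|x(t)\|^2$, and then to apply the Banach contraction mapping principle. For each candidate trajectory $x(\cdot)$ in this space, assumption $(\mathcal{H}.5)$ lets us define a feedback control
\begin{equation*}
\begin{split}
u_x(t) = W^{-1}\Bigl\{&x_1 - U(T,0)[\varphi(0)+g(0,\varphi(-r(0)))] + g(T,x(T-r(T)))\\
&+ \int_0^T A(s)U(T,s)g(s,x(s-r(s)))\,ds - \int_0^T U(T,s)f(s,x(s-\rho(s)))\,ds\\
&- \int_0^T U(T,s)\sigma(s)dB^H(s)\Bigr\}(t),
\end{split}
\end{equation*}
calibrated so that inserting $u_x$ into the right-hand side of \eqref{eqmild2} produces the value $x_1$ at time $T$. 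One then defines the nonlinear operator $\Phi$ on $\mathcal{C}([-\tau,T], \mathbb{L}^2(\Omega, X))$ by $(\Phi x)(t) = \varphi(t)$ on $[-\tau,0]$ and by the right-hand side of \eqref{eqmild2} with $u = u_x$ for $t \in [0,T]$. By construction $(\Phi x)(T) = x_1$, so any fixed point of $\Phi$ is a mild solution steering the system to the prescribed target.

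Next, I would verify that $\Phi$ is well-defined as an endomorphism of $\mathcal{C}([-\tau,T], \mathbb{L}^2(\Omega, X))$. The deterministic terms are controlled using $(\mathcal{H}.1)(i)$ together with the linear-growth bound $(\mathcal{H}.2)(ii)$; in particular, the neutral convolution $\int_0^t A(s)U(t,s)g(s,x(s-r(s)))\,ds$ is handled via the uniform estimate $\|A(s)g(s,\cdot)\|^2 \leq C_2(1+\|\cdot\|^2)$ combined with the exponential decay $\|U(t,s)\|\leq Me^{-\beta(t-s)}$, which renders $\int_0^T \|U(T,s)\|\,ds$ uniformly finite. The stochastic convolution is handled by Lemma~\ref{lem2} and $(\mathcal{H}.4)(i)$, while the control term is bounded via $(\mathcal{H}.5)$ and the above estimates on $u_x$. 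Mean-square continuity of $\Phi x$ on $[0,T]$ follows from $(\mathcal{H}.2)(i)$, $(\mathcal{H}.3)(ii)$, and strong continuity of the evolution family.

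The heart of the argument is the contraction estimate for $\Phi$. Writing $(\Phi x)(t)-(\Phi y)(t)$ explicitly produces six contributions: three from the direct differences of the neutral jump $g(t,x(t-r(t)))$, the neutral convolution involving $A(s)U(t,s)g$, and the drift convolution involving $U(t,s)f$; and three more inherited through the control difference $u_x - u_y$ after expanding it with $W^{-1}$. Applying the elementary inequality $\|\sum_{i=1}^6 a_i\|^2 \leq 6\sum_{i=1}^6 \|a_i\|^2$ together with the key estimate
\begin{equation*}
\|g(t,x)-g(t,y)\| \leq \|A^{-1}(t)\|\,\|A(t)g(t,x)-A(t)g(t,y)\| \leq M_* L_* \|x-y\|,
\end{equation*}
which combines $(\mathcal{H}.1)(ii)$ with $(\mathcal{H}.3)(i)$, and using the Lipschitz bound $(\mathcal{H}.2)(i)$ on $f$, yields an inequality of the form $\mathbb{E}\|(\Phi x)(t)-(\Phi y)(t)\|^2 \leq \kappa(t)\,\|x-y\|_T^2$ in which the decisive coefficient is $6(L_* M_*)^2 < 1$ from the hypothesis $L_* M_* < 1/\sqrt 6$. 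The main technical hurdle is absorbing the auxiliary constants that appear with $C_1$, $M_b$, $M_w$, and the integrated kernels so that the overall contraction constant $\kappa(t)$ stays strictly below $1$; the standard remedy is to replace $\|\cdot\|_T$ by the equivalent weighted norm $\|x\|_\lambda^2 = \sup_{t\in[-\tau,T]} e^{-\lambda t}\mathbb{E}\|x(t)\|^2$ for $\lambda$ large enough, which shrinks the Lipschitz contributions coming from $f$ and the control term while leaving the critical factor $6(L_* M_*)^2$ untouched. The Banach contraction principle then produces a unique fixed point of $\Phi$, which is the controlled mild solution establishing the asserted controllability.
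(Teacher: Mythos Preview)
Your overall architecture---define the feedback control $u_x$ via $W^{-1}$, build the fixed-point operator $\Phi$, verify well-definedness, and then isolate $6(L_*M_*)^2$ as the indispensable part of the contraction constant---matches the paper's proof step for step. The divergence, and the gap, is in the last paragraph, where you propose absorbing the remaining Lipschitz contributions by passing to the weighted norm $\|x\|_\lambda^2 = \sup_t e^{-\lambda t}\mathbb{E}\|x(t)\|^2$.

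The weighted-norm trick works for Volterra-type operators because the value $(\Phi x)(t)$ depends only on $x|_{[-\tau,t]}$; the weight $e^{-\lambda t}$ then beats the accumulated weight $e^{\lambda s}$, $s\le t$, coming from bounding $\|x(s)-y(s)\|$ in the $\lambda$-norm. Here, however, the control $u_x$ is built from $x$ on the \emph{entire} interval $[0,T]$: it contains $g(T,x(T-r(T)))$ and the integrals $\int_0^T U(T,s)A(s)g(s,x(s-r(s)))\,ds$, $\int_0^T U(T,s)f(s,x(s-\rho(s)))\,ds$. Consequently, for every $t\in[0,T]$ the control contribution to $(\Phi x)(t)-(\Phi y)(t)$ carries a factor $\sup_{s\in[-\tau,T]}\mathbb{E}\|x(s)-y(s)\|^2 \le e^{\lambda T}\|x-y\|_\lambda^2$, and the resulting coefficient behaves like $e^{-\lambda t}\cdot t\cdot e^{\lambda T}$, which blows up as $\lambda\to\infty$ (take $t\sim 1/\lambda$). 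Increasing $\lambda$ therefore does not shrink the control terms; the operator $\Phi$ is genuinely non-causal and the weighted norm cannot tame it.

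The paper handles this differently: it writes the contraction constant as a function $\gamma(t)$ in which every control-related term carries an explicit factor of $t$ (coming from the outer integral $\int_0^t U(t,\nu)B(\cdot)\,d\nu$), so that $\gamma(0)=6(L_*M_*)^2<1$. One then picks $T_1\in(0,T]$ small enough that $\gamma(T_1)<1$, obtains a fixed point on $[-\tau,T_1]$, and iterates to reach $T$. If you want to salvage your write-up, you should replace the weighted-norm paragraph with this local-in-time argument.
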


\begin{proof}

Fix $T>0$ and let  $\mathcal{B}_T := \mathcal{C}([-\tau, T],
\mathbb{L}^2(\Omega, X)$ be the Banach space of all   continuous
functions from $[-\tau, T]$ into $\mathbb{L}^2(\Omega, X)$,
equipped  with the  supremum norm
$\|\xi\|_{\mathcal{B}_T}=\displaystyle\sup_{u \in
[-\tau,T]}\left(\mathbb{E} \|\xi (u)\|^2\right)^{1/2}$ and let us
consider the set
 $$S_T=\{x\in \mathcal{B}_T : x(s)=\varphi(s),\; \mbox {for} \;\;s \in [-\tau,0] \}.$$
 $S_T$ is a closed subset of $\mathcal{B}_T$ provided with the norm  $\|.\|_{\mathcal{B}_T}$.

Using the hypothesis $(\mathcal{H}.5)$  for an arbitrary function
$x(.)$, define the stochastic  control
$$
\begin{array}{lll}
  u(t) & =&W^{-1}\{x_1-U(T,0)(\varphi(0)+g(0,\varphi(-r(0))))+g(T,x(T-r(T)))\\ \\
   & +&\int_0^T AU(T,s)g(s,x(s-r(s)))ds -\int_0^T U(T,s)f(s,x(s-\rho (s))ds\\ \\
   & -& \int_0^T
   U(T,s)\sigma(s)dB^H(s).
\end{array}
$$
We will now show that using this control that the operator  $\psi$
on $S_T(\varphi)$ defined  by $\psi(x)(t)=\varphi(t)$ for $t\in
[-\tau,0]$
 and for $t\in [0,T]$
 $$
 \begin{array}{ll}
   \psi(x)(t)&=U(t,0)(\varphi(0)+g(0,\varphi(-r(0))))-g(t,x(t-r(t)))-\int_0^t
   U(t,s)A(s)g(s,x(s-r(s)))ds\\ \\
&+\int_0^t U(t,s)f(s,x(s-\rho
(s)))ds+\int_0^tU(t,\nu)\sigma(s)dB^H(s) \\
\\
&+\int_0^tU(t,\nu)BW^{-1}\{x_1-U(T,0)(\varphi(0)+g(0,\varphi(-r(0))))+g(T,x(T-r(T)))\\\\
    & +\int_0^T U(T,s)A(s)g(s,x(s-r(s)))ds -\int_0^T U(T,s)f(s,x(s-\rho (s))ds\\   \\
    & -\int_0^T
   U(T,s)\sigma(s)dB^H(s) \}d\nu,
 \end{array}
$$
has a fixed point. This fixed point is then a solution of
(\ref{eq1}). Clearly, $\psi(x)(T)=x_1$,  which  implies that the
system (\ref{eq1}) is controllable.\\
For better readability, we break the proof into sequence of steps.\\

 {\bf Step 1:} $\psi$ is well defined.  Let $x \in S_T(\varphi)$ and $t\in [0,T]$, we are going to show that each function $\psi(x)(.)$
 is continuous  on $[0,T]$ in the $\mathbb{L}^2(\Omega,X)$-sense.

Let $0 <t<T$  and $|h|$  be sufficiently small. Then for any fixed
$x\in S_T$, we have
$$
\begin{array}{lll}
\E\|\psi(x)(t&+&h)-\psi(x)(t)\|^2\leq
6\E\|(U(t+h,0)-U(t,0))(\varphi(0)+g(0,\varphi(-r(0))))\|^2\\ \\
&+&6\E\|g(t+h,x(t+h-r(t+h)))-g(t,x(t-r(t)))\|^2\\ \\
&+&6\E\|\int_0^{t+h} U(t+h,s)A(s)g(s,x(s-r(s))ds -\int_0^t
U(t,s)A(s)g(s,x(s-r(s))ds\|^2\\ \\
&+&6\E\|\int_0^{t+h} U(t+h,s)f(s,x(s-\rho (s)))ds-\int_0^t
U(t,s)f(s,x(s-\rho (s)))ds\|^2\\ \\
&+&6\E\|\int_0^{t+h}U(t+h,s)\sigma(s)dB^H(s)-\int_0^tU(t,s)\sigma(s)dB^H(s)\|^2\\
\\

&+&6\E \|
\int_0^{t+h}U(t+h,\nu)BW^{-1}\{x_1-U(T,0)(\varphi(0)+g(0,\varphi(-r(0))))\\
\\
&+&g(T,x(T-r(T)))  +  \int_0^T U(T,s)A(s)g(s,x(s-r(s)))ds \\ \\
&-&\int_0^T U(T,s)f(s,x(s-\rho (s))ds
     -\int_0^T
   U(T,s)\sigma(s)dB^H(s) \}d\nu\\ \\
&-&\int_0^{t}U(t,\nu)BW^{-1}\{x_1-U(T,0)(\varphi(0)+g(0,\varphi(-r(0))))\\
\\ &+&g(T,x(T-r(T)))   - \int_0^T
   U(T,s)\sigma(s)dB^H(s) \}d\nu\\
   \\
 &=& 6 \sum_{1\leq i \leq 6}\E\|I_i(t+h)-I_i(t)\|^2.
\end{array}
$$

From Definition \ref{d1}, we obtain
$$
\lim_{h\longrightarrow0}(U(t+h,0)-U(t,0))(\varphi(0)+g(0,\varphi(-r(0))))=0.
$$
From $(\mathcal{H}.1)$, we have
$$
\|(U(t+h,0)-U(t,0))(\varphi(0)+g(0,\varphi(-r(0))))\|\leq Me^{-\beta
t}(e^{-\beta h}+1)\|\varphi(0)+g(0,\varphi(-r(0)))\|\in L^2(\Omega).
$$
Then we conclude by the Lebesgue dominated theorem that
$$
\lim_{h\longrightarrow0}\E\|I_1(t+h)-I_1(t)\|^2=0.
$$
 Moreover, assumption $(\mathcal{H}.2)$ ensures that
$$
\lim_{h\longrightarrow0}\E\|I_2(t+h)-I_2(t)\|^2=0.
$$
To show that the third term $I_3(h)$ is continuous, we suppose
$h>0$ (similar calculus for $h<0$). We have

\begin{eqnarray*}
\|I_3(t+h)-I_3(t)\|&\leq & \left\|\int_0^t ( U(t+h,s)-U(t,s))A(s) g(s,x(s-r(s)))ds\right\|\\
&& +\left\|\int_t^{t+h} (U(t,s)g(s,x(s-r(s)))ds\right\|\\
&\leq & I_{31}(h)+I_{32}(h).
\end{eqnarray*}
By H\"older's inequality, we have
$$
\E\|I_{31}(h)\|\leq t\E\int_0^t \|
U(t+h,s)-U(t+h,s))A(s)g(s,x(s-r(s))\|^2ds.
$$
By Definition \ref{d1}, we obtain
$$
\lim_{h\longrightarrow0}( U(t+h,s)-U(t,s))A(s)g(s,x(s-r(s)))=0.
$$
From $(\mathcal{H}.1)$ and $(\mathcal{H}.2)$, we have
$$
\|U(t+h,s)-U(t,s))A(s)g(s,x(s-r(s))\|\leq
C_2Me^{-\beta(t-s)}(e^{-\beta h }+1) \|A(s)g(s,x(s-r(s))\|\in
L^2(\Omega).
$$
 Then we conclude by the Lebesgue dominated theorem that
$$
\lim_{h\longrightarrow0}\E\|I_{31}(h)\|^2=0.
$$
So, estimating as before. By  using $(\mathcal{H}.1)$ and
$(\mathcal{H}.2)$, we get
$$
\E\|I_{32}(h)\|^2\leq \frac{M^2C_2(1-e^{-2\beta
h})}{2\beta}\int_t^{t+h}(1+\E\|x(s-r(s))\|^2)ds.
$$
Thus,
$$
\lim_{h\longrightarrow0}\E\|I_{32}(h)\|^2=0.
$$

 For the fourth term $I_4(h)$, we suppose $h>0$
(similar calculus for $h<0$). We have
\begin{eqnarray*}
\|I_4(t+h)-I_4(t)\|&\leq & \left\|\int_0^t ( U(t+h,s)-U(t,s))f(s,x(s-\rho(s)))ds\right\|\\
&& +\left\|\int_t^{t+h} (U(t,s)f(s,x(s-\rho(s)))ds\right\|\\
&\leq & I_{41}(h)+I_{42}(h).
\end{eqnarray*}

By H\"older's inequality, we have
$$
\E\|I_{41}(h)\|\leq t\E\int_0^t \|
U(t+h,s)-U(t,s))f(s,x(s-\rho(s))\|^2ds.
$$

Again exploiting properties of Definition \ref{d1}, we obtain
$$
\lim_{h\longrightarrow0}( U(t+h,s)-U(t,s))f(s,x(s-\rho(s)))=0,
$$
and
$$
\|U(t+h,s)-U(t,s))f(s,x(s-\rho(s))\|\leq
Me^{-\beta(t-s)}(e^{-\beta h }+1)\|f(s,x(s-\rho(s))\|\in
L^2(\Omega).
$$
 Then we conclude by the Lebesgue dominated theorem that
$$
\lim_{h\longrightarrow0}\E\|I_{41}(h)\|^2=0.
$$

 On the  other hand, by $(\mathcal{H}.1)$ ,  $(\mathcal{H}.2)$, and the H\"older's inequality, we have
\begin{eqnarray*}
\E\|I_{42}(h)\|\leq\frac{M^2C_2(1-e^{-2\beta
h})}{2\beta}\int_t^{t+h} (1+\E\|x(s-\rho(s))\|^2)ds.
\end{eqnarray*}
Thus
 $$
 \lim_{h\rightarrow 0}I_{42}(h)=0.$$

Now, for the term $I_5(h)$, we have
\begin{eqnarray*}
\|I_5(t+h)-I_5(t)\|&\leq & \|\int_0^t (U(t+h,s)-U(t,s)\sigma(s)dB^H(s)\|\\
&+&\|\int_t^{t+h} U(t+h,s)\sigma(s)dB^H(s)\|\\
&\leq & I_{51}(h)+I_{52}(h).
\end{eqnarray*}

By   Lemma \ref{lem2}, we get that
\begin{eqnarray*}
E\|I_{51}(h)\|^2&\leq &2Ht^{2H-1}\int_0^t \|[U(t+h,s)-U(t,s)]\sigma(s)\|_{\mathcal{L}_2^0}^2ds.\\
\end{eqnarray*}
 Since
 $$
 \displaystyle\lim_{h\rightarrow 0} \|[U(t+h,s)-U(t,s)] \sigma(s)\|_{\mathcal{L}_2^0}^2=0
 $$
  and
 $$\|(U(t+h,s)-U(t,s) \sigma(s)\|_{\mathcal{L}_2^0}\leq MLe^{-\beta (t-s)}e^{-\beta h+1}\in \mathbb{L}^1([0,T],\, ds),$$
 we conclude, by the dominated convergence theorem that,
 $$ \lim_{h\rightarrow 0}\mathbb{E}\|I_{51}(h)\|^2=0.  $$
 Again by Lemma \ref{lem2}, we get that

$$
\mathbb{E}\|I_{52}(h)\|^2\leq  \frac{2Ht^{2H-1}LM^2(1-e^{-2\beta h
})}{2\beta}.
$$
Thus,
 $$ \lim_{h\rightarrow 0}\mathbb{E}|I_{52}(h)|^2=0.  $$

For the estimation of term $I_6$, we have

$$
\begin{array}{ll}
 \E\|I_{6}(h)\|^2 &\leq 2\E
\|
\int_t^{t+h}U(t+h,\nu)BW^{-1}\{x_1-U(T,0)(\varphi(0)+g(0,\varphi(-r(0))))\\\\
&+ g(T,x(T-r(T)))+ \int_0^T U(T,s)A(s)g(s,x(s-r(s)))ds\\\\
    &-\int_0^T U(T,s)f(s,x(s-\rho (s))ds -\int_0^T
   U(T,s)\sigma(s)dB^H(s)\|\\\\
   &+2\E\|\int_0^{t}(U(t+h,\nu)-U(t,\nu))BW^{-1}\{x_1-U(T,0)(\varphi(0)+g(0,\varphi(-r(0))))\\\\
   &+g(T,x(T-r(T)))    + \int_0^T U(T,s)A(s)g(s,x(s-r(s)))ds \\ \\
   &-\int_0^T U(T,s)f(s,x(s-\rho (s))ds-\int_0^T
   U(T,s)\sigma(s)dB^H(s)\}d\nu\|
   \\\\
   &\leq 2[\E\|I_{6,1}(h)\|^2+\E\|I_{6,2}(h)\|^2].
\end{array}
$$

 Let's first deal with $I_{6,1}(h)$, it follows from the conditions
 $(\mathcal{H}.1)-(\mathcal{H}.5)$ that

$$
\begin{array}{ll}
  \E\|I_{6,1}(h)\|^2 &\leq6 M^2M_b^2M_w^2\int_t^{t+h}\{\E\|x_1\|^2+M^2\E\|\varphi(0)+g(0,\varphi(-r(0)))\|^2\\\\
  &+M_*^2C_2T(1+\sup_{s\in[-\tau,T]}\E\|x(s)\|^2) +M^2TC_2(1+\sup_{s\in[-\tau,T]}\E\|x(s)\|^2)\\\\
  &+M^2TC_2(1+\sup_{s\in[-\tau,T]}\E\|x(s)\|^2)+2M^2HT^{2H-1}\int_0^T\|\sigma(s)\|_{\mathcal{L}_2^0}^2ds\}d\nu.
\end{array}
$$

It results that
$$
 \lim_{h\rightarrow 0}\mathbb{E}||I_{6,1}(h)||^2=0.
$$

In a similar way, we have
$$
\begin{array}{ll}
  \E\|I_{6,2}(h)\|^2 & \leq6 M_b^2M_w^2\int_0^{t}\|U(t+h,\nu)-U(t,\nu)\|^2\{\E\|x_1\|^2\\ \\
  &+M^2\E\|\varphi(0)+g(0,\varphi(-r(0)))\|^2
  +M_*^2C_2(1+\E\|x\|^2)\\\\
   &+ M^2T^2   C_2(1+\E\|x\|^2)+M^2
   T^2C_2(1+\E\|x\|^2)\\\\
   &+2M^2HT^{2H-1}\int_0^T\|\sigma(s)\|_{\mathcal{L}_2^0}^2ds\}d\nu.
\end{array}
$$
Since
$$
\begin{array}{ll}
 & \|U(t+h,\nu)-U(t,\nu)\|^2\{\E\|x_1\|^2+M^2\E\|\varphi(0)+g(0,\varphi(-r(0)))\|^2
  +M_*^2C_4(1+\sup_{s\in[-\tau,T]}\E\|x(s)\|^2)\\ \\
  &+ M^2 T^2  C_2(1+\sup_{s\in[-\tau,T]}\E\|x(s)\|^2)+M^2T^2C_2(1+\sup_{s\in[-\tau,T]}\E\|x(s)\|^2)\\ \\
  &+2M^2HT^{2H-1}\int_0^T\|\sigma(s)\|_{\mathcal{L}_2^0}^2ds \}\\ \\
 &  \leq  4M^2   \{\E\|x_1\|^2+M^2\E\|\varphi(0)+g(0,\varphi(-r(0)))\|^2\\ \\
  &+M_*^2C_2(1+\sup_{s\in[-\tau,T]}\E\|x(s)\|^2)+ 2M^2T^2   C_2(1+\sup_{s\in[-\tau,T]}\E\|x(s)\|^2)\\ \\
  &+2M^2HT^{2H-1}\int_0^T\|\sigma(s)\|_{\mathcal{L}_2^0}^2ds\}  \in
  L^1([0,T],ds]),
\end{array}
$$
we conclude, by the dominated  convergence  theorem that,
$$
\lim_{h\rightarrow 0}\mathbb{E}||I_{6,2}(h)||^2=0.
$$

The above arguments show that $\displaystyle\lim_{h\rightarrow
0}\mathbb{E}\|\psi(x)(t+h)-\psi(x)(t)\|^2=0$.  Hence, we conclude
that  the function  $t \rightarrow \psi(x)(t)$ is continuous on
$[0,T]$ in the $\mathbb{L}^2$-sense.

{\bf Step 2:}    Now, we are going to show that $\psi$ is a
contraction mapping in $S_{T_1}(\varphi)$ with some $T_1\leq T$ to
be specified later.
 Let $x,y\in S_T(\varphi)$,    then for   any fixed  $t\in [0,T]$, we have
$$
\begin{array}{ll}
\E&\|\psi(x)(t)-\psi(y)(t)\|^2\\ \\
&\leq6\|A(t)^{-1}\|^2\E\|A(t)g(t,x(t-r(t)))-A(t)g(t,y(t-r(t)))\|^2\\ \\
&+6\E\|\int_0^t U(t,s)A(s)(g(s,x(s-r(s)))-g(s,y(s-r(s))))ds\|^2\\ \\
&+6\E\|\int_0^tU(t,s)(f(s,x(s-\rho(s)))-f(s,y(s-\rho(s))))ds\|^2\\ \\
&+6\E\|
\int_0^{t}U(t,\nu)BW^{-1}[g(T,x(T-r(T)))-g(T,y(T-r(T)))]d\nu\|^2\\
\\
&+6\E\| \int_0^{t}U(t,\nu)BW^{-1}\int_0^T
U(T,s)A(s)[g(s,x(s-r(s)))-g(s,y(s-r(s)))]ds]d\nu\|^2
\\\\
 &+ 6\E\|\int_0^{t}U(t,\nu)BW^{-1}\int_0^T U(T,s)[f(s,x(s-\rho
 (s)))-f(s,y(s-\rho (s)))]dsd\nu\|^2.
\end{array}
$$
By assumptions   combined with H\"older's inequality, we get that
$$
\begin{array}{ll}
\mathbb{E}\|\psi(x)(t)-\psi(y)(t)\|^2 &\leq  6 L_*^2M_*^2
\sup_{s\in[-\tau,t]}\mathbb{E}\|x(t-r)-y(t-r)\|^2\\ \\
 &+ 6
M^2L_*^2\frac{1-e^{-2\beta t}}{2\beta}t
\sup_{s\in[-\tau,t]} \mathbb{E}\|x(s)-y(s)\|^2\\\\
&+6M^2C_1^2\frac{1-e^{-2\beta t}}{2\beta}t
\sup_{s\in[-\tau,t]} \mathbb{E}\|x(s)-y(s)\|^2\\\\
&+6\bf{t}
M^2M_b^2M_w^2[C_1^2\E\|x(T-r(T))-y(T-r(T))\|^2\\\\
&+L_*^2M^2T^2\sup_{s\in[-\tau,t]} \mathbb{E}\|x(s-r(s))-y(s-r(s))\|^2\\\\
&+T^2M^2C_1^2  \sup_{s\in[-\tau,t]} \mathbb{E}\|x(s)-y(s)\|^2.\\\\
\end{array}
$$

Hence
$$\sup_{s\in[-\tau,T]}\mathbb{E}\|\psi(x)(s)-\psi(y)(s)\|^2\leq
\gamma(t) \sup_{s\in[-\tau,T]} \mathbb{E}\|x(s)-y(s)\|^2,
$$
where


$$
\begin{array}{ll}
  \gamma(t)=&6[\|L_*^2M_*^2+M^2L_*^2\frac{1-e^{-2\beta t}}{2\beta}t+M^2C_1^2\frac{1-e^{-2\beta
  t}}{2\beta}t
  \\\\
   & +\bf{t} M^2M_b^2M_w^2
(C_1^2+L_*^2M^2T^2+T^2M^2C_1^2].
\end{array}
$$

By condition   $(\mathcal{H}.3)$, we have $\gamma(0)=6 L_*^2M_*^2
<1$. Then there exists $0<T_1\leq T $ such that $0<\gamma(T_1)<1$
and $\psi$ is a contraction mapping on $S_{T_1}$ and therefore has a
unique fixed point, which is a mild solution of equation (\ref{eq1})
on $[-\tau,T_1]$. This procedure can be repeated in order to extend
the solution to the entire interval $[-\tau,T]$ in finitely  many
steps. Clearly, $(\psi x)(T)=x_1$ which implies that the system
(\ref{eq1}) is controllable on $[-\tau,T]$.    This completes the
proof.
\end{proof}

\section{An illustrative  Example}
In recent years, the interest in neutral systems has been growing
rapidly due to their successful applications in practical fields
such as physics, chemical technology, bioengineering, and electrical
networks.  We consider the following stochastic partial neutral
functional differential equation with finite   delays $\tau_1$ and
$\tau_2$ $(0\leq \tau_i\leq\tau<\infty,\; i=1,2)$:
\begin{equation}\label{pe}
\left\{\begin{array}{lll}
   d\left[u(t,\zeta)+g_1(t,u(t-\tau_1,\zeta))\right]&= &
    [\frac{\partial^2}{\partial^2\zeta}u(t,\zeta)+b(t,\zeta)u(t,\zeta)+f_1(t,
    u(t-\tau_2,\zeta))\\ \\&+&v(t,\xi)]dt
  + \sigma(t) dB^H(t),\,0\leq t  \leq T,\,0\leq \zeta
   \leq\pi,\\\\
 u(t,0)=u(t,\pi) = 0,&& \hspace{-1cm}0\leq t  \leq T, \\ \\
   u(t,\zeta)= \varphi(t,\zeta),&&\hspace{-2.5cm} t\in[-\tau,0],\; 0\leq \zeta
   \leq\pi,
  \end{array}\right.
\end{equation} where $B^H$ is a fractional Brownian motion,
$b(t,\zeta)$ is a continuous function and is uniformly H\"older
continuous in $t$,  $f_1$, $g_1:\R^+\times\R\longrightarrow\R$ are
continuous
functions.\\
To study this system, we consider the space $X=L^2([0,\pi])$ and the
operator $A:D(A)\subset X\longrightarrow X$ given  by $Ay=y{''}$
with
$$
D(A)=\{y\in X: y''\in X,\quad y(0))=y(\pi)=0\}.
$$
 It is well known that $A$ is the infinitesimal generator of an
analytic semigroup \{$T(t)\}_{t\geq 0}$ on $X$. Furthermore, $A$ has
discrete spectrum with eigenvalues $-n^2,\, n\in \N$
 and the corresponding normalized eigenfunctions given by
 $$
e_n:=\sqrt{\frac{2}{\pi}}\sin nx,\; n=1,2,....
$$
In addition  $(e_n)_{n\in\N}$ is  a complete orthonormal basis in
$X$ and $$ T(t)x=\sum_{n=1}^{\infty}e^{-n^2t}<x,e_n>e_n,
$$
for $x\in X$ and $t\geq0$.\\

Now,   we define an   operator $A(t):D(A)\subset X\longrightarrow X$
by
$$
A(t)x(\zeta)=Ax(\zeta)+b(t,\zeta)x(\zeta).
$$
By assuming that $b(.,.)$ is continuous and that
$b(t,\zeta)\leq-\gamma$ $(\gamma>0)$ for every $t\in \R$,
$\zeta\in[0,\pi]$,  it follows that the system
$$
  \left\{
\begin{array}{ll}
u'(t)&=  A(t)u(t), \quad t\geq s,   \\
u(s) &=  x\in X , \\
\end{array}
\right.
$$
has an associated evolution family given by
$$
U(t,s)x(\zeta)=\left[T(t-s)\exp(\int_s^t
b(\tau,\zeta))d\tau)x\right](\zeta).
$$
From this expression, it follows that $U(t,s)$ is a compact linear
operator and that for every $s,t\in[0,T]$ with $t>s$
$$
\|U(t,s)\|\leq e^{-(\gamma+1)(t-s)}
$$
In addition, $A(t)$ satisfies the assumption $\mathcal{H}_1$ (see
\cite{baghli, ren2009}).

To rewrite the initial-boundary value problem (\ref{pe}) in the
abstract form we assume the following:

\begin{itemize}
\item [$i)$]  $B: U\longrightarrow X$ is a bounded linear operator
  defined by
  $$
Bu(t)(\xi)=v(t,\xi),\;0\leq \xi \leq\pi, \, u\in L^2([0,T], U).
  $$
  \item[$ii)$]  The operator $W:L^2([0,T], U)\longrightarrow X $ defined by
  $$
W u=\int_0^TS(T-s)v(t,\xi)ds
  $$
  has an inverse $W^{-1}$  and satisfies condition
  $(\mathcal{H}.5)$. For the construction of the operator $W$ and
  its inverse, see \cite{qui85}.

  \item [$iii)$] The substitution operator $f:[0,T]\times X\longrightarrow
  X$ defined by $f(t,u)(.)=f_1(t,u(.))$ is continuous and we impose
  suitable conditions on $f_1$ to verify assumption  $\mathcal{H}_2$.
  \item [$iv)$] The substitution operator $g:[0,T]\times X\longrightarrow
  X$ defined by $g(t,u)(.)=g_1(t,u(.))$ is continuous and we impose
  suitable conditions on $g_1$ to verify assumptions  $\mathcal{H}_2$ and $\mathcal{H}_3$.
\end{itemize}
If we put
\begin{equation}
 \left\{\begin{array}{ll}
x(t)(\zeta)=x(t,\zeta),\;t\in[0,T],\,\zeta\in[0,\pi]\\
x(t,\zeta)= \varphi(t,\zeta), \; t\in[-\tau,0],\; \zeta\in[0,\pi],
 \end{array}\right.
\end{equation}

then,  the problem (\ref{pe}) can be written in the abstract form
{\small \begin{eqnarray*}
 \left\{\begin{array}{lll}
d[x(t)+g(t,x(t-r(t)))]=[A(t)x(t)+f(t,x(t-\rho(t))]dt+\sigma
(t)dB^H(t)   ,\;0\leq t \leq T,\nonumber\\
x(t)=\varphi(t) ,\;-\tau \leq t \leq 0.
\end{array}\right.
\end{eqnarray*}
}

Furthermore, if we  assume that the initial data
$\varphi=\{\varphi(t): -\tau\leq t \leq 0 \}$ satisfies   $\varphi
\in \mathcal{C}([-\tau,0],\mathbb{L}^2(\Omega,X))$, thus all the
assumptions of Theorem \ref{jthm1} are fulfilled. Therefore, we
conclude that the system (\ref{pe}) is controllable on $[-\tau,T]$.



\end{document}